\documentclass[psamsfonts,reqno,12pt]{amsart}
\addtolength{\textwidth}{1.0in} \addtolength{\hoffset}{-0.5in}
\newtheorem{theorem}{Theorem}[section]
\newtheorem{lemma}[theorem]{Lemma}
\newtheorem{proposition}[theorem]{Proposition}
\newtheorem{corollary}[theorem]{Corollary}

\theoremstyle{definition}
\newtheorem{definition}[theorem]{Definition}
\newtheorem{example}[theorem]{Example}

\theoremstyle{remark}
\newtheorem{remark}[theorem]{Remark}

\numberwithin{equation}{section}

\newcommand\N{\mathbb{N}}
\newcommand\R{\mathbb{R}}

\newcommand\K{\mathbb{K}}
\newcommand\J{\mathbb{J}}

\def\sideremark#1{\ifvmode\leavevmode\fi\vadjust{\vbox
to0pt{\vss \hbox to 0pt{\hskip\hsize\hskip1em
\vbox{\hsize2cm\tiny\raggedright\pretolerance10000
\noindent#1\hfill}\hss}\vbox to8pt{\vfil}\vss}}}

\begin{document}

\title[Schauder frames which are near-Schauder bases]
{A characterization of Schauder frames which are near-Schauder
bases}

\author{Rui Liu}

\address{Department of Mathematics and LPMC, Nankai
University, Tianjin 300071, P.R. China}

\address{Department of
Mathematics, Texas A$\&$M University, College Station, TX
77843-3368}

\email{leorui@mail.nankai.edu.cn; rliu@math.tamu.edu}

\author{Bentuo Zheng}

\address{Department of Mathematics\\ The University of Texas at Austin\\
1 University Station C1200\\ Austin, TX 78712-0257}

\email{btzheng@math.utexas.edu}

\subjclass[2000]{Primary 46B15, 46B45; Secondary 47A20.}

\keywords{Schauder frame; Near-Schauder basis; Minimal-associated
sequence space; Minimal-associated reconstruction operator.}

\thanks{Rui Liu was supported by funds from the Linear Analysis Workshop at Texas A$\&$M University in
2008, the
National Natural Science Foundation of China (No. 10571090), the
Doctoral Programme Foundation of Institution of Higher Education
(No. 20060055010), and the China Scholarship Council}

\thanks{Bentuo Zheng's research is supported in part by NSF grant DMS-0800061}
\begin{abstract}
A basic problem of interest in connection with the study of Schauder
frames in Banach spaces is that of characterizing those Schauder
frames which can essentially be regarded as Schauder bases. In this
paper, we give a solution to this problem using the notion of the
minimal-associated sequence spaces and the minimal-associated
reconstruction operators for Schauder frames. We prove that a
Schauder frame is a near-Schauder basis if and only if the kernel of
the minimal-associated reconstruction operator contains no copy of
$c_0$. In particular, a Schauder frame of a Banach space with no
copy of $c_0$ is a near-Schauder basis if and only if the
minimal-associated sequence space contains no copy of $c_0$. In
these cases, the minimal-associated reconstruction operator has a
finite dimensional kernel and the dimension of the kernel is exactly
the excess of the near-Schauder basis. Using these results, we make
related applications on Besselian frames and near-Riesz bases.

\end{abstract}

\date{\today}
\maketitle

\section{Introduction}

The theory of frames in Hilbert spaces presents a central tool in
mathematics and engineering, and has developed rather rapidly in the
past decade. The motivation has come from applications to signal
analysis, as well as from applications to a wide variety of areas of
mathematics, such as, sampling theory \cite{AG}, operator theory
\cite{HL}, harmonic analysis \cite{Gr}, nonlinear sparse
approximation \cite{DE}, pseudo-differential operators \cite{GH},
and quantum computing \cite{EF}. Recently, the theory of frames also
showed connections to theoretical problems such as the
Kadison-Singer Problem \cite{CFTW,CT}.

Recall that if $(x_i)$ is a standard frame for a Hilbert space $H$
with the frame transform $S$, then we get the reconstruction formula
(see \cite{Ch}):
\[x=\sum \langle x, S^{-1}x_i \rangle x_i \quad \mbox{ for all }
x\in H.\] The definition of a Schauder frame in a Banach space comes
naturally from this representation \cite{CDOSZ,CHL,HL}, which, on
the one hand, generalizes Hilbert frames, and extends the notion of
Schauder bases, on the other. Moreover, from \cite[Proposition
2.4]{CDOSZ}, the property of a Banach space $X$ to admit a
(unconditional) Schauder frame is equivalent to the property of $X$
being isomorphic to a complemented subspace of a Banach space with a
(unconditional) Schauder basis. It was shown independently by
Pe{\l}czy$\acute{\mathrm{n}}$ski \cite{Pe} and Johnson, Rosenthal
and Zippin \cite{JRZ} (see also \cite[Theorem 3.13]{Ca}) that the
property of $X$ being isomorphic to a complemented subspace of a
space with a basis is equivalent to $X$ having the Bounded
Approximation Property.

For a Schauder frame, a natural and important problem is that of
determining when it is a near-Schauder basis in the sense that the
deletion of a finite subset leaves a Schauder basis, that is, the
Schauder frame has finite excess. Section 3 is the introduction of
what we call a minimal-associated sequence space and a
minimal-associated reconstruction operator. In Section 4, we show
that a Schauder frame is a near-Schauder basis if and only if the
kernel of the minimal-associated reconstruction operator contains no
copy of $c_0$. In particular, a Schauder frame of a Banach space
with no copy of $c_0$ is a near-Schauder basis if and only if the
minimal-associated sequence space contains no copy of $c_0$. In
these cases, the minimal-associated reconstruction operator has a
finite dimensional kernel and the dimension of the kernel is exactly
the excess of the near-Schauder basis. In Section 5, we make related
applications to Besselian frames and near-Riesz bases.

\section{Preliminaries}

The unit sphere and the unit ball of a Banach space $X$ are denoted
by $S_X$ and $B_X$, respectively. The vector space of scalar
sequences $(a_i)$, which vanish eventually, is denoted by $c_{00}$.
The usual unit vector basis of $c_{00}$, as well as the unit vector
basis of $c_0$ and $\ell_p$ ($1\le p<\infty$) and the corresponding
biorthogonal functionals will be denoted by $(e_i)$ and $(e^*_i)$,
respectively. The closed linear span of a family of $(x_i)$ is
denoted by $[x_i]$.

A Schauder basis of a Banach space $X$ is a sequence
$(x_i)_{i=1}^\infty$, which has the property that every $x$ can be
uniquely written as a norm converging series $x=\sum_{i=1}^\infty
a_i x_i$. It follows then from the Uniform Boundedness Principle
that the biorthogonal functionals $(x_i^*), x_i^*:X\to \K, \sum a_j
x_j\mapsto a_i$ are bounded. Let $P_n$ be the natural projection of
$X$ onto $[x_i]_{i=1}^n$, the span of $(x_i)_{i=1}^n$. The basis
constant of $(x_i)$ is $\sup_n\{\|P_n\|\}$. The projection constant
of $(x_i)$ is $\sup_{n,m}\{\|P_n-P_m\|\}$. A family of $(x_i)$ which
is a Schauder basis of its closed linear span, $[x_i]$, is called a
basic sequence.

Two basic sequences $(x_i)$ and $(y_i)$ in the respective Banach
spaces $X$ and $Y$ are \emph{equivalent} and we write
$(x_i)\sim(y_i)$, if whenever we take a sequence of scalars $(a_i)$,
then $\sum_{i=1}^\infty a_i x_i$ converges if and only if
$\sum_{i=1}^\infty a_i y_i$ converges. Moreover, for two basic
sequences $(x_i)$ and $(y_i)$, the following conditions are
equivalent \cite{AK}:
\begin{enumerate}
\item[i)] $(x_i)\sim(y_i)$;
\item[ii)] There is an isomorphism $T:[x_i]\to [y_i]$ such that
$T(x_i)=y_i$ for each $i\in\N$;
\item[iii)] There exists a constant
$C>0$ such that for all sequences of scalars
$(a_i)\in c_{00}$ we have
\[C^{-1} \Big\|\sum a_i x_i\Big\|\le \Big\|\sum a_i y_i\Big\|\le C \Big\|\sum a_i x_i\Big\|.\]
\end{enumerate}

We say that a Banach space $X$ contains no copy of $c_0$ if $X$
contains no subspace isomorphic to $c_0$, or equivalently, there is
no basic sequence in $X$ equivalent to the unit vector basis $(e_i)$
of $c_0$. If there is a basic sequence $(x_i)$ in $X$ such that
there are positive constants $A,B>0$ such that for all finite
nonzero sequences of scalars $(a_i)\in c_{00}$ we have
\[A \max |a_i|\le \Big\|\sum a_i x_i\Big\|\le B \max |a_i|,\] then we say that $X$ contains a
$\sqrt{B/A}$-copy of $c_0$.

\section{Minimal-associated Sequence Spaces and Near-Schauder Bases}

In this section we give a short review of the concepts of Schauder
frames and associated sequence spaces (see also
\cite{CDOSZ,CHL,Liu,OSSZ}), and introduce the notion of
minimal-associated sequence spaces and near-Schauder bases.

\begin{definition}
Let $X$ be a (finite or infinite dimensional) separable Banach
space. A sequence $(x_j,f_j)_{j\in\J}$, with $(x_j)_{j\in\J}\subset
X$, $(f_j)_{j\in\J}\subset X^*$, and $\J=\N$ or $\J=\{1,2,. . . ,
N\}$, for some $N\in\N$, is called a \emph{Schauder frame of $X$} if
for every $x\in X$ \begin{equation}\label{eq:5}x=\sum_{j\in\J}
f_j(x) x_j.\end{equation} When $\J=\N$, we mean that the series in
(\ref{eq:5}) converges in norm,
$x=\displaystyle\lim_{n\to\infty}\sum_{j=1}^n f_j(x)x_j.$
\end{definition}

\begin{remark}
Throughout this paper, it will be our convention that we only
consider non-zero Schauder frames $(x_i,f_i)$ indexed by $\N$, i.e.
that $x_i\neq 0$ and $f_i\neq 0$ for $i\in\N$.
\end{remark}

\begin{definition}
Let $(x_i, f_i)$ be a Schauder frame of a Banach space $X$ and let
$E$ be a Banach space with a Schauder basis $(e_i)$. We call $(E,
(e_i))$ an \emph{associated sequence space to $(x_i, f_i)$} and
$(e_i)$ an \emph{associated Schauder basis}, if
\begin{eqnarray*}
& & S:E\to X,\quad \sum a_i e_i\mapsto \sum  a_i x_i \ \ \text{ and
} \\ & & T:X\to E,\quad x=\sum f_i(x) x_i\mapsto \sum f_i(x) e_i
\end{eqnarray*}
are bounded operators. Recall $S$ the \emph{associated
reconstruction operator} and $T$ the \emph{associated decomposition
operator} or \emph{analysis operator}.
\end{definition}

In \cite{Gr} the triple $((x_i), (f_i), E)$ is called an
\emph{atomic decomposition of $X$}.

\begin{remark}\label{rmk:1}
Notice that for all $x\in X$, \[S\circ T(x)=S\circ T\big(\sum f_i(x)
x_i\big)=S\big(\sum f_i(x) e_i\big)= \sum f_i(x) x_i=x,\] that is,
$S\circ T=\mathrm{Id}_X$. Therefore, $T$ must be an isomorphic
embedding from $X$ into $E$ and $S$ a surjection onto $X$. Moreover,
it follows easily that $E=\ker S \oplus T(X)$.
\end{remark}

\begin{definition}
Let $(x_i,f_i)$  be a Schauder frame of a Banach space $X$. We
denote the unit vector basis of $c_{00}$ by $(e_i)$ and define on
$c_{00}$ the following norm $\|\cdot\|_\mathrm{min}$:
\begin{equation}\label{eq:4}
\Big\|\sum a_i e_i\Big\|_\mathrm{min}=\max_{m\le n}
\Big\|\sum_{i=m}^n a_i x_i\Big\|_X \ \ \mbox{ for all } (a_i) \in
c_{00}.
\end{equation}
It follows easily that $(e_i)$ is a bimonotone basic sequence with
respect to $\|\cdot\|_\mathrm{min}$, which we denote by
$(\hat{e}_i)$, and thus, a Schauder basis of the completion of
$c_{00}$ with respect to $\|\cdot\|_\mathrm{min}$, which we denote
by $E_\mathrm{min}$ (see also \cite[Proposition 2.4]{CDOSZ} and
\cite[Theorem 2.6]{CHL}). From the proof of \cite[Proposition
2.4]{CDOSZ} and \cite[Theorem 2.6]{CHL}, we also know that
$(E_\mathrm{min},(\hat{e}_i))$ is an associated sequence space to
$(x_i,f_i)$. If $(E,(e_i))$ is an associated sequence space to
$(x_i,f_i)$ with $S$ the associated reconstruction operator and $K$
the projection constant of $(e_i)$, then for any $(a_i)\in c_{00}$,
\begin{eqnarray*}
\Big\|\sum a_i \hat{e}_i\Big\|&=&\max_{m\le n}\Big\|\sum_{i=m}^n a_i
x_i\Big\|=\max_{m\le n}\Big\|\sum_{i=m}^n a_i S(e_i)\Big\|\\ &\le&
\|S\| \max_{m\le n}\Big\|\sum_{i=m}^n a_i e_i\Big\|\le K \|S\|
\Big\|\sum a_i e_i\Big\|.
\end{eqnarray*}
Thus, we call $(E_\mathrm{min},(\hat{e}_i))$ the
\emph{minimal-associated sequence space to $(x_i,f_i)$} or
\emph{minimal sequence space associated to $(x_i,f_i)$} and
$(\hat{e}_i)$ the \emph{minimal-associated Schauder basis},
respectively. We call $S_\mathrm{min}:E_\mathrm{min}\to X, \sum a_i
\hat{e}_i\mapsto \sum a_i x_i$ the \emph{minimal-associated
reconstruction operator} and $T_\mathrm{min}:X\to E_\mathrm{min},
x=\sum f_i(x)x_i\mapsto \sum f_i(x)\hat{e}_i$ the
\emph{minimal-associated decomposition operator} or \emph{analysis
operator}.

\end{definition}

\begin{lemma}\label{lm:1}
$\sum_{i=1}^\infty a_i x_i$ converges in $X$ if and only if
$\sum_{i=1}^\infty a_i \hat{e}_i$ converges in $E_\mathrm{min}$.
\end{lemma}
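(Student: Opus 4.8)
The plan is to reduce both implications to the Cauchy criterion for series in a Banach space, using the explicit description of the block norms in $E_\mathrm{min}$. Since $E_\mathrm{min}$ is complete (being a completion) and $(\hat{e}_i)$ is a Schauder basis of it, the series $\sum_i a_i \hat{e}_i$ converges in $E_\mathrm{min}$ exactly when its sequence of partial sums is Cauchy, and likewise $\sum_i a_i x_i$ converges in $X$ exactly when its partial sums are Cauchy. First I would record the identity, immediate from the definition (\ref{eq:4}), that for $n>m$
\[
\Big\|\sum_{i=m+1}^n a_i \hat{e}_i\Big\|_\mathrm{min} = \max_{m+1 \le p \le q \le n} \Big\|\sum_{i=p}^q a_i x_i\Big\|_X .
\]
Both directions then follow by comparing the two Cauchy conditions through this formula.

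For the implication that convergence in $E_\mathrm{min}$ forces convergence in $X$, I would simply observe that the full block $\sum_{i=m+1}^n a_i x_i$ is itself one of the terms appearing in the maximum above, so that $\|\sum_{i=m+1}^n a_i x_i\|_X \le \|\sum_{i=m+1}^n a_i \hat{e}_i\|_\mathrm{min}$; hence a Cauchy sequence of partial sums in $E_\mathrm{min}$ immediately yields a Cauchy sequence of partial sums in $X$. Equivalently, this is nothing more than the continuity of the minimal-associated reconstruction operator $S_\mathrm{min}$, which sends $\hat{e}_i$ to $x_i$ and has norm at most $1$.

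The converse is where the structure of the minimal norm genuinely enters, and is the only step needing care. Assuming $\sum_i a_i x_i$ converges in $X$, I write $\sigma_n = \sum_{i=1}^n a_i x_i$; given $\eps > 0$ I choose $N$ so that $\|\sigma_q - \sigma_r\|_X < \eps$ for all $q,r \ge N$. For $n > m \ge N$ and any admissible pair $m+1 \le p \le q \le n$, I would rewrite the inner block as a difference of partial sums, $\sum_{i=p}^q a_i x_i = \sigma_q - \sigma_{p-1}$, and note that both indices satisfy $q \ge N$ and $p-1 \ge m \ge N$, so every such block has norm $< \eps$. Taking the maximum over the finitely many admissible pairs $(p,q)$ then gives $\|\sum_{i=m+1}^n a_i \hat{e}_i\|_\mathrm{min} \le \eps$, showing that the partial sums of $\sum_i a_i \hat{e}_i$ are Cauchy in the complete space $E_\mathrm{min}$, hence convergent.

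The one point to watch is precisely this last estimate: the $E_\mathrm{min}$-norm of a tail block is a maximum over all of its subintervals, not merely over the block as a whole, so I must ensure that \emph{every} subinterval sum is small, uniformly. The decomposition $\sum_{i=p}^q a_i x_i = \sigma_q - \sigma_{p-1}$ with both endpoints lying beyond $N$ is exactly the device that delivers this, and it is the crux of the argument; everything else is a routine application of completeness and the Cauchy criterion.
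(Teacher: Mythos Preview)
Your proof is correct and follows essentially the same approach as the paper's: both directions are handled via the Cauchy criterion together with the identity $\big\|\sum_{i=m}^n a_i \hat{e}_i\big\|_\mathrm{min}=\max_{m\le p\le q\le n}\big\|\sum_{i=p}^q a_i x_i\big\|_X$, the key observation in the nontrivial direction being that every subinterval of a tail block lies beyond the Cauchy threshold $N$. The paper's version is terser but the argument is the same.
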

\begin{proof}
Sufficiency is trivial by using $S_\mathrm{min}$ the
minimal-associated reconstruction operator. For necessity, if
$\sum_{i=1}^\infty a_i x_i$ converges, then for any $\epsilon>0$,
there is $N\in\N$ such that for any $N<m\le n$, $\|\sum_{i=m}^n a_i
x_i\|<\epsilon$, then \[\Big\|\sum_{i=m}^n a_i
\hat{e}_i\Big\|=\max_{m\le p\le q\le n}\Big\|\sum_{i=p}^q a_i
x_i\Big\|<\epsilon.\] It follows that $\sum_{i=1}^\infty a_i
\hat{e}_i$ converges.
\end{proof}

\begin{definition}\label{def:1}
Let $X$ a Banach space with $(x_i)\subset X$. We call $(x_i)$ a
\emph{near-Schauder basis of $X$} if there is a finite set
$\sigma\subset\N$ such that $(x_i)_{i\notin \sigma}$ is a Schauder
basis of $X$. We call a Schauder frame $(x_i,f_i)$ a near-Schauder
basis if $(x_i)$ is a near-Schauder basis.
\end{definition}

\begin{remark}
If there are two finite subsets $\sigma_1,\sigma_2\subset \N$ such
that $(x_i)_{i\notin\sigma_1}$ and $(x_i)_{i\notin\sigma_2}$ both
are Schauder bases of $X$, then $\mathrm{card} \,
\sigma_1=\mathrm{card} \, \sigma_2.$ Indeed, let
$N=\max\{i:i\in\sigma_1\cup\sigma_2\}$, then the Schauder bases
$(x_i)_{i\notin\sigma_1}$ and $(x_i)_{i\notin\sigma_2}$ both contain
the basic subsequence $(x_i)_{i> N}$. It follows easily that
$\mathrm{codim}\, [x_i]_{i>
N}=N-\mathrm{card}\,\sigma_1=N-\mathrm{card}\,\sigma_2.$ It implies
that $\mathrm{card} \, \sigma_1=\mathrm{card} \, \sigma_2.$

Then we define the \emph{excess} of a near-Schauder basis $(x_i)$ of
$X$ by
\[\mathrm{exc}(x_i)=\{\mathrm{card} \, \sigma: \exists \mbox{ a finite subset } \sigma\subset \N\, \mbox{
s.t. } (x_i)_{i\notin \sigma}
\mbox{ is a Schauder basis of } X\}.\]
\end{remark}

\section{Main Results}

The following is our main theorem, which gives a characterization of
Schauder frames which are near-Schauder bases.

\begin{theorem}\label{th:1}
Let $(x_i,f_i)$ be a Schauder frame of a Banach space $X$ and let
$(E_\mathrm{min},(\hat{e}_i))$ be the minimal-associated sequence
space to $(x_i,f_i)$ with $S_\mathrm{min}$ the minimal-associated
reconstruction operator.

Then the following conditions are equivalent:
\begin{enumerate}
\item[a)] The kernel of\,
$S_\mathrm{min}$ contains no copy of $c_0$;
\item[b)] $S_\mathrm{min}$ has a finite dimensional kernel;
\item[c)] $(x_i)$ is a near-Schauder basis of\, $X$.
\end{enumerate}
Furthermore, in this case, we have \[\mathrm{exc}(x_i)=\dim(\ker
S_\mathrm{min}).\]
\end{theorem}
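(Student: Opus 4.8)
The plan is to prove the chain of implications $\text{c)} \Rightarrow \text{b)} \Rightarrow \text{a)} \Rightarrow \text{c)}$, and then establish the dimension formula. The implication $\text{b)} \Rightarrow \text{a)}$ is immediate, since a finite dimensional space cannot contain a copy of the infinite dimensional space $c_0$. For $\text{c)} \Rightarrow \text{b)}$, I would argue as follows. Suppose $(x_i)_{i \notin \sigma}$ is a Schauder basis of $X$ with $\sigma$ finite, say $\mathrm{card}\,\sigma = k$. The key observation is that Lemma \ref{lm:1} tells us $\sum a_i \hat{e}_i$ converges in $E_\mathrm{min}$ exactly when $\sum a_i x_i$ converges in $X$. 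An element of $\ker S_\mathrm{min}$ is a convergent series $\sum a_i \hat{e}_i$ with $\sum a_i x_i = 0$. Since deleting the indices in $\sigma$ leaves a Schauder basis, every $x \in X$ has a unique expansion over $(x_i)_{i \notin \sigma}$; the freedom in representing $0 = \sum a_i x_i$ over the full frame is then controlled by the finitely many coordinates indexed by $\sigma$, so I expect $\dim(\ker S_\mathrm{min}) \le k$.

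\textbf{The main implication.}

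The hard part will be $\text{a)} \Rightarrow \text{c)}$, and this is where the hypothesis that $\ker S_\mathrm{min}$ contains no copy of $c_0$ does its real work. The strategy I would pursue is the contrapositive: assuming $(x_i)$ is \emph{not} a near-Schauder basis, I want to construct a copy of $c_0$ inside $\ker S_\mathrm{min}$. If $(x_i)$ fails to be a near-Schauder basis, then no finite deletion yields a Schauder basis, which means that however far out we go, there is always further linear dependence or failure of the basis projections to stabilize. Concretely, I would try to build, by a gliding-hump argument, a sequence of disjointly supported blocks $u_n = \sum_{j \in F_n} a_j \hat{e}_i$ (the supports $F_n$ being consecutive finite intervals of $\N$) such that each block satisfies $S_\mathrm{min}(u_n) = \sum_{j \in F_n} a_j x_j = 0$ while $\|u_n\|_\mathrm{min}$ stays bounded above and below. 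Because the $\|\cdot\|_\mathrm{min}$ norm is computed as a supremum of partial-sum norms over subintervals and the blocks are disjointly supported, I expect the closed span $[u_n]$ to be isometrically (or nearly so) a copy of $c_0$: the bimonotonicity of $(\hat{e}_i)$ should force $\|\sum_n c_n u_n\|_\mathrm{min} \approx \sup_n |c_n| \|u_n\|_\mathrm{min}$ for any blocks that begin and end with a full block. Since each $u_n \in \ker S_\mathrm{min}$, this copy of $c_0$ lives in $\ker S_\mathrm{min}$, contradicting a).

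\textbf{Producing the blocks and the dimension count.}

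The genuine obstacle is producing these norm-bounded-below kernel blocks from the mere failure of the near-basis property; this requires extracting, from the non-stabilization of the frame, infinitely many \emph{essentially independent} relations $\sum_{j \in F_n} a_j x_j = 0$ with coefficients that do not shrink to zero in the $\|\cdot\|_\mathrm{min}$ norm. I would quantify the failure of the near-basis property by using the excess together with an iterated selection: whenever the kernel is infinite dimensional, one can choose kernel elements whose supports can be pushed arbitrarily far to the right (since finitely supported kernel elements are dense in $\ker S_\mathrm{min}$ by Lemma \ref{lm:1}), and after a perturbation and normalization these yield the desired disjoint blocks. Once all three conditions are shown equivalent, the dimension formula $\mathrm{exc}(x_i) = \dim(\ker S_\mathrm{min})$ should follow by sharpening the counting in $\text{c)} \Rightarrow \text{b)}$: I would show that a minimal finite deletion set $\sigma$ with $\mathrm{card}\,\sigma = \mathrm{exc}(x_i)$ gives precisely $\dim(\ker S_\mathrm{min}) = \mathrm{card}\,\sigma$, by exhibiting a linear isomorphism between $\ker S_\mathrm{min}$ and the space of coefficient-relations supported on $\sigma$ against the remaining basis, using the uniqueness of expansions over the deleted Schauder basis $(x_i)_{i \notin \sigma}$.
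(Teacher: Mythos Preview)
Your cycle has a structural gap. You set out to prove $\text{c)} \Rightarrow \text{b)} \Rightarrow \text{a)} \Rightarrow \text{c)}$, but in the ``hard'' step you begin from ``$(x_i)$ is not a near-Schauder basis'' and then immediately pass to ``the kernel is infinite dimensional'' in order to run the gliding hump. That passage is precisely the implication $\text{b)} \Rightarrow \text{c)}$ in contrapositive form, and you have not proved it: you only sketched its converse $\text{c)} \Rightarrow \text{b)}$. In the paper this missing direction is handled separately (Proposition~\ref{pp:2}) and is not free: one observes that $Q = \mathrm{Id}_E - T\circ S$ is a finite-rank projection onto $\ker S$, and then a compactness argument produces $N$ with $\inf\{\|u - Q(u)\| : u \in [\hat{e}_i]_{i\ge N},\ \|u\|=1\} > 0$, forcing $S|_{[\hat e_i]_{i\ge N}}$ to be an isomorphism and hence $(x_i)_{i\ge N}$ to be basic and equivalent to $(\hat{e}_i)_{i\ge N}$. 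You should either reorganize as $\text{a)} \Leftrightarrow \text{b)}$ together with $\text{b)} \Leftrightarrow \text{c)}$ (as the paper does), or supply an independent argument for $\text{b)} \Rightarrow \text{c)}$.

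There is also a technical error in the block construction. You write that ``finitely supported kernel elements are dense in $\ker S_{\min}$ by Lemma~\ref{lm:1}'', but Lemma~\ref{lm:1} says no such thing and the claim is false: if the $x_i$ happen to be linearly independent there are \emph{no} nonzero finitely supported kernel elements at all, yet $\ker S_{\min}$ may still be infinite dimensional. What actually works (Proposition~\ref{pp:1}) is to take a norm-one $u_i \in \ker S_{\min} \cap [\hat{e}_j]_{j>n_{i-1}}$ and truncate it to $\tilde{u}_i = \sum_{j=n_{i-1}+1}^{n_i} \hat{e}_j^*(u_i)\hat{e}_j$, which is \emph{not} in the kernel; the key is that one can make both the tail $\|u_i - \tilde{u}_i\|<\epsilon_i$ and the image $\|S_{\min}(\tilde{u}_i)\|<\delta_i$ small, the latter because $S_{\min}(u_i)=0$. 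Once the blocks are only approximately in the kernel, bimonotonicity by itself no longer delivers the upper $c_0$-estimate, since an interval $[m,n]$ containing many full blocks now contributes $\sum_k c_k S_{\min}(\tilde{u}_k)$ rather than zero; one needs $\sum_k \delta_k < \infty$ to control this. The paper then closes the argument via Lemma~\ref{lm:1}: for any $(c_i)\in c_0$ the corresponding series $\sum b_j x_j$ is shown to converge in $X$, hence $\sum c_i \tilde{u}_i$ converges in $E_{\min}$, which combined with the equivalence $(\tilde u_i)\sim(u_i)\subset\ker S_{\min}$ forces a copy of $c_0$ inside the kernel.
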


Then by Theorem \ref{th:1}, we obtain the following corollary, which
gives a characterization of Schauder frames of a space with no copy
of $c_0$ which are near-Schauder bases.

\begin{corollary}\label{cor:1}
Let $(x_i,f_i)$ be a Schauder frame of a Banach space $X$ and let
$(E_\mathrm{min},(\hat{e}_i))$ be the minimal-associated sequence
space to $(x_i,f_i)$ with $S_\mathrm{min}$ the minimal-associated
reconstruction operator.

Then the following conditions are equivalent:
\begin{enumerate}
\item[a)] $E_\mathrm{min}$ contains no copy of $c_0$.
\item[b)]
\begin{enumerate}
\item[i)] $X$ contains no copy of $c_0$.
\item[ii)] $S_\mathrm{min}$ has a finite dimensional kernel.
\end{enumerate}
\item[c)]
\begin{enumerate}
\item[i)] $X$ contains no copy of $c_0$.
\item[ii)] $(x_i)$ is a near-Schauder basis of $X$.
\end{enumerate}
\end{enumerate}
Furthermore, in this case, we have\, $\mathrm{exc}(x_i)=\dim(\ker
S_\mathrm{min}).$
\end{corollary}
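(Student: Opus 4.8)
The plan is to derive Corollary \ref{cor:1} from Theorem \ref{th:1} by reading $E_\mathrm{min}$ through the splitting of Remark \ref{rmk:1}. First I would record that $P=T_\mathrm{min}\circ S_\mathrm{min}$ is a bounded idempotent on $E_\mathrm{min}$: indeed $P^2=T_\mathrm{min}(S_\mathrm{min}T_\mathrm{min})S_\mathrm{min}=T_\mathrm{min}S_\mathrm{min}=P$ using $S_\mathrm{min}\circ T_\mathrm{min}=\mathrm{Id}_X$, its range is $T_\mathrm{min}(X)$ and its kernel is $\ker S_\mathrm{min}$. Hence $E_\mathrm{min}=T_\mathrm{min}(X)\oplus\ker S_\mathrm{min}$ is a topological direct sum, and since $T_\mathrm{min}$ is an isomorphic embedding (Remark \ref{rmk:1}) we have $T_\mathrm{min}(X)\cong X$. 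Thus, up to isomorphism, $E_\mathrm{min}=X\oplus\ker S_\mathrm{min}$, and the whole corollary amounts to tracking how copies of $c_0$ in this sum are shared between the two summands.

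For (a)$\Rightarrow$(b) I would use that the absence of a copy of $c_0$ in $E_\mathrm{min}$ passes to every closed subspace. Applying this to the subspace $T_\mathrm{min}(X)\cong X$ gives that $X$ contains no copy of $c_0$, which is (b)(i); applying it to the subspace $\ker S_\mathrm{min}$ shows that $\ker S_\mathrm{min}$ contains no copy of $c_0$, whence $S_\mathrm{min}$ has finite dimensional kernel by the implication (a)$\Rightarrow$(b) of Theorem \ref{th:1}, which is (b)(ii).

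For (b)$\Rightarrow$(a) the tool is the classical characterization (see \cite{AK}) that a Banach space contains no copy of $c_0$ precisely when every weakly unconditionally Cauchy series in it converges. Given such a series $\sum_n w_n$ in $E_\mathrm{min}\cong X\oplus\ker S_\mathrm{min}$, write $w_n=(y_n,z_n)$; testing against functionals of the form $(x^*,0)$ and $(0,\psi)$ shows that $\sum_n y_n$ is weakly unconditionally Cauchy in $X$ and $\sum_n z_n$ is weakly unconditionally Cauchy in the finite dimensional space $\ker S_\mathrm{min}$. By (b)(i) the first series converges, and finite dimensionality makes the second converge, so $\sum_n w_n$ converges. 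Hence $E_\mathrm{min}$ contains no copy of $c_0$.

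Finally, (b)$\Leftrightarrow$(c) and the excess formula are immediate from Theorem \ref{th:1}: condition (i) is identical in (b) and (c), while (b)(ii) and (c)(ii) are exactly conditions (b) and (c) of the theorem, which are equivalent and yield $\mathrm{exc}(x_i)=\dim(\ker S_\mathrm{min})$. I expect the only delicate point in the corollary to be the verification that $P$ is a genuine bounded projection, so that the decomposition of $E_\mathrm{min}$ is topological and the summand $T_\mathrm{min}(X)$ really is isomorphic to $X$; after that, the remaining obstacle---ruling out $c_0$ in a direct sum---is handled by the routine weakly unconditionally Cauchy argument above, since the substantive analytic step (an infinite dimensional, $c_0$-free kernel cannot occur) has already been carried out in Theorem \ref{th:1}.
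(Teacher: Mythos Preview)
Your proposal is correct and follows the same route the paper intends: the paper's own proof of Corollary~\ref{cor:1} is literally the single sentence ``It follows from Theorem~\ref{th:1},'' with no further elaboration. What you have written is a careful unpacking of that sentence, using the decomposition $E_\mathrm{min}=\ker S_\mathrm{min}\oplus T_\mathrm{min}(X)$ from Remark~\ref{rmk:1} and then invoking Theorem~\ref{th:1} on the kernel summand; this is exactly the argument the authors leave implicit.

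One minor remark: for (b)$\Rightarrow$(a) you go through the weakly unconditionally Cauchy characterization of spaces without $c_0$. This is perfectly fine, but slightly heavier than needed. Since $\ker S_\mathrm{min}$ is finite dimensional, a more direct route is available: any normalized basic sequence in $E_\mathrm{min}\cong X\oplus\ker S_\mathrm{min}$ equivalent to the $c_0$ basis would, after passing to a subsequence along which the (bounded) second coordinates converge and then taking successive differences, yield a seminormalized sequence in $X\oplus\{0\}\cong X$ still equivalent to the $c_0$ basis, contradicting (b)(i). Either argument suffices; yours has the virtue of citing a standard theorem cleanly.
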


\begin{remark}
It is well known that reflexive spaces (or even separable dual
Banach spaces) contain no copy of $c_0$. So many classical Banach
spaces, for example $\ell_p$ $(1\leq p<\infty)$, are in this
category.
\end{remark}

For the proof of Theorem \ref{th:1} and Corollary \ref{cor:1}, we
need the following results.

\begin{proposition}\label{pp:2}
Let $(x_i,f_i)$ be a Schauder frame of a Banach space $X$ and let
$(E,(e_i))$ be an associated sequence space to $(x_i,f_i)$ with $S$
the associated reconstruction operator.

Then $S$ has a finite dimensional kernel if and only if there is a
finite subset $\sigma \subset\N$ such that $(x_i)_{i\notin\sigma}$
is a Schauder basis of $X$ which is equivalent to
$(e_i)_{i\notin\sigma}$. Furthermore, in this case, we have\,
$\mathrm{exc}(x_i)=\dim(\ker S).$
\end{proposition}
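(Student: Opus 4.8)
The plan is to reduce both implications to the behaviour of $S$ on the complemented subspaces $[e_i]_{i\in\sigma}$ and $[e_i]_{i\notin\sigma}$. Since $(e_i)$ is a Schauder basis of $E$, for any finite $\sigma\subset\N$ the coordinate projection $x\mapsto\sum_{i\in\sigma}e_i^*(x)e_i$ is a bounded finite rank operator, so $E=[e_i]_{i\in\sigma}\oplus[e_i]_{i\notin\sigma}$ is a topological direct sum with $\dim[e_i]_{i\in\sigma}=\mathrm{card}\,\sigma$. The crux of the whole statement is the observation that $(x_i)_{i\notin\sigma}$ is a Schauder basis of $X$ equivalent to $(e_i)_{i\notin\sigma}$ exactly when the restriction $S|_{[e_i]_{i\notin\sigma}}\colon[e_i]_{i\notin\sigma}\to X$ is an isomorphism: this follows from the equivalence of conditions i) and ii) for basic sequences recorded in Section 2, since $S$ sends $e_i\mapsto x_i$ and a subsequence of a basis is again a basic sequence.

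For sufficiency, assume such a $\sigma$ exists, so $S|_{[e_i]_{i\notin\sigma}}$ is an isomorphism onto $X$. Given $z\in\ker S$, I would decompose $z=u+v$ along $E=[e_i]_{i\in\sigma}\oplus[e_i]_{i\notin\sigma}$; then $S(v)=-S(u)$ forces $v=\big(S|_{[e_i]_{i\notin\sigma}}\big)^{-1}(-S(u))$, so $z$ is completely determined by $u$, and the assignment $z\mapsto u$ is a linear isomorphism of $\ker S$ onto $[e_i]_{i\in\sigma}$. Hence $\dim(\ker S)=\mathrm{card}\,\sigma<\infty$, which proves the ``if'' direction and simultaneously computes $\dim(\ker S)$.

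For necessity, suppose $d:=\dim(\ker S)<\infty$. Writing $P_n$ and $Q_n=\mathrm{Id}-P_n$ for the basis projections of $E$, each $z\in\ker S$ satisfies $Q_nz\to 0$, and since $\ker S$ is finite dimensional this convergence is uniform on its unit ball, so $\|Q_N|_{\ker S}\|<1$ for some $N$; consequently $P_N|_{\ker S}$ is injective. Fixing a basis $z_1,\dots,z_d$ of $\ker S$ and writing $z_k=\sum_i a_{k,i}e_i$, injectivity of $P_N$ means the $d\times N$ matrix $(a_{k,i})_{k\le d,\,i\le N}$ has rank $d$, so I can select $\sigma\subset\{1,\dots,N\}$ with $\mathrm{card}\,\sigma=d$ for which the $d\times d$ submatrix $(a_{k,i})_{k\le d,\,i\in\sigma}$ is invertible. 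This invertibility is exactly what yields $E=\ker S\oplus[e_i]_{i\notin\sigma}$: any element of $\ker S$ lying in $[e_i]_{i\notin\sigma}$ has vanishing $\sigma$-coordinates, which forces the coefficient vector into the trivial kernel of the submatrix; and each $e_j$ with $j\in\sigma$ can be corrected by a unique element of $\ker S$ so as to land in $[e_i]_{i\notin\sigma}$, by solving an invertible linear system. As $\ker S$ is finite dimensional this algebraic complement is automatically a topological one, and because $S$ is a surjection onto $X$ with kernel $\ker S$ (Remark \ref{rmk:1}), $S|_{[e_i]_{i\notin\sigma}}$ is a continuous bijection onto $X$, hence an isomorphism by the open mapping theorem. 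By the first paragraph this produces the required $\sigma$.

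Finally, the excess formula follows by combining the two directions with the remark that $\mathrm{card}\,\sigma$ is the same for every admissible $\sigma$: necessity supplies a $\sigma$ with $\mathrm{card}\,\sigma=\dim(\ker S)$, so $\mathrm{exc}(x_i)=\mathrm{card}\,\sigma=\dim(\ker S)$. I expect the main obstacle to lie in the necessity direction, and specifically in the passage from the abstract hypothesis ``$\ker S$ is finite dimensional'' to a concrete index set $\sigma$: the key is to record $\ker S$ in the basis coordinates via a rank-$d$ matrix and then to verify cleanly that a well-chosen block of $d$ columns produces a genuine topological complement of $\ker S$ among the tail spans $[e_i]_{i\notin\sigma}$.
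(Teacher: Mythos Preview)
Your argument is correct, and it differs substantially from the paper's. For sufficiency the paper argues by contradiction (an infinite-dimensional $\ker S$ would meet the finite-codimensional subspace $[e_i]_{i\notin\sigma}$ nontrivially), whereas you exhibit directly the linear isomorphism $\ker S\to[e_i]_{i\in\sigma}$, $z\mapsto\sum_{i\in\sigma}e_i^*(z)e_i$, which already yields $\dim(\ker S)=\mathrm{card}\,\sigma$. For necessity the paper brings in the decomposition operator $T$, uses that $Q=\mathrm{Id}_E-T\circ S$ is a finite-rank projection onto $\ker S$, and runs a fairly long block-basis compactness contradiction to find $N$ with $S|_{[e_i]_{i\ge N}}$ an isomorphism onto $[x_i]_{i\ge N}$; only then does it pass (with a separate two-inequality computation) to a suitable $\sigma\subset\{1,\dots,N\}$ and to $\mathrm{exc}(x_i)=\dim(\ker S)$. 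Your route bypasses $T$ entirely: the uniform convergence $Q_n\to0$ on the finite-dimensional $\ker S$ gives $P_N|_{\ker S}$ injective, and the rank argument on the coordinate matrix singles out $\sigma$ with $\mathrm{card}\,\sigma=d$ for which $E=\ker S\oplus[e_i]_{i\notin\sigma}$, so the open mapping theorem finishes. The paper's approach makes explicit the role of the splitting $E=\ker S\oplus T(X)$, while yours is shorter, uses only elementary linear algebra plus open mapping, and delivers the excess formula for free.
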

\begin{proof} Sufficiency. Suppose, to the contrary, that $\ker S$
is infinite dimensional. By hypothesis, there is a finite subset
$\sigma \subset\N$ such that $(x_i)_{i\notin\sigma}$ is a Schauder
basis of $X$ equivalent to $(e_i)_{i\notin\sigma}$. Then
$S|_{[e_i]_{i\notin\sigma}}$ is an isomorphism from
$[e_i]_{i\notin\sigma}$ onto $[x_i]_{i\notin\sigma}=X$. Since
$\dim(\ker S)=\infty$ and
$\mathrm{codim}([e_i]_{i\notin\sigma})=\mathrm{card} \sigma<\infty$,
there is $u\in [e_i]_{i\notin\sigma}\cap \ker S$ with $\|u\|=1$.
Then $S|_{[e_i]_{i\notin\sigma}}(u)=S(u)=0$, which leads to a
contradiction.

Necessity. Let $T$ be the associated decomposition operator. We
claim that $\ker S=(\mathrm{Id}_E-T\circ S)(E).$ Indeed, for any
$u\in E$, if $S(u)=0$, then $(\mathrm{Id}_E-T\circ S)(u)=u$. So
$\ker S\subset(\mathrm{Id}_E-T\circ S)(E)$. On the other hand, by
Remark \ref{rmk:1}, $S\circ T=\mathrm{Id}_X$. Then for any $u\in E$,
$S\circ(\mathrm{Id}_E-T\circ S)(u)=S(u)-S\circ T\circ
S(u)=S(u)-S(u)=0.$ It follows that $(\mathrm{Id}_E-T\circ
S)(E)\subset \ker S$. Thus, $\ker S=(\mathrm{Id}_E-T\circ S)(E)$.

Then let $Q=\mathrm{Id}_E-T\circ S$. Since $\ker S$ is finite
dimensional, $Q$ is a finite-rank operator. We claim that there is
$N\in\N$ such that \[\inf\big\{\|u-Q(u)\|:u\in [e_i]_{i\ge N},
\|u\|=1\big\}>0.\] Suppose not. That is, for all $n\in\N$,\,
$\inf\big\{\|u-Q(u)\|:u\in [e_i]_{i\ge n}, \|u\|=1\big\}=0$. Then
for any $n\in\N$, there is $u_n \in [e_i]_{i\ge n}$ so that
$\|u_n\|=1$ and $\|u_n-Q(u_n)\|<1/2^{n+1}$. Furthermore, there is
big enough $m_n\ge n$ and $\tilde{u}_n\in [e_i]_{n\le i\le m_n}$ so
that $\|\tilde{u}_n\|=1$ and
$\|\tilde{u}_n-u_n+Q(\tilde{u}_n-u_n)\|\le 1/2^{n+1}$, then
$\|\tilde{u}_n-Q(\tilde{u}_n)\|\le
\|\tilde{u}_n-u_n+Q(\tilde{u}_n-u_n)\|+\|u_n-Q(u_n)\|\le 1/2^n.$
Choose an increasing sequence $(n_i)\subset \N$ such that
$(\tilde{u}_{n_i})$ is a normalized block basic sequence of $(e_i)$,
i.e. a sequence of norm one vectors with finite increasing supports,
with $\|\tilde{u}_{n_i}-Q(\tilde{u}_{n_i})\|\le 1/2^{n_i}$ for each
$i\in\N$. Since $Q$ is a finite-rank operator and
$(Q(\tilde{u}_{n_i}))$ is a bounded sequence, there is a further
subsequence, which, without loss of generality, we still denote by
$(\tilde{u}_{n_i})$, such that $Q(\tilde{u}_{n_i})\to u_0$ for some
$u_0\in E$. It follows easily that $\|u_0\|=1$. Let $(e_i^*)\subset
E^*$ be the biorthogonal functionals of $(e_i)$. Pick $N_1$ with
$|e_{N_1}^*(u_0)|> 0$, then
\begin{eqnarray*} \mathrm{dist}(u_0,[e_i]_{i>
N_1})=\inf_{u\in [e_i]_{i> N_1}} \|u_0-u\|\ge\inf_{u\in [e_i]_{i>
N_1}}\frac{|e_{N_1}^*(u_0-u)|}{\|e_{N_1}^*\|}=\frac{|e_{N_1}^*(u_0)|}{\|e_{N_1}^*\|}>0.
\end{eqnarray*} Let $\delta=|e_{N_1}^*(u_0)|/\|e_{N_1}^*\|$. Choose $N_2>N_1$ so big that, for all
$n_i>N_2$, we
have $\|Q(\tilde{u}_{n_i})-u_0\|<\delta/2$ and $1/2^{N_2}<\delta/2.$
Take a sequence of positive numbers $(b_i)$ with
$\sum_{n_i>N_2}b_i=1.$ Then
\begin{eqnarray*} \Big\|\sum_{n_i>N_2} b_i \tilde{u}_{n_i}-
\sum_{n_i>N_2} b_i Q(\tilde{u}_{n_i})\Big\|&\ge&
\Big\|\sum_{n_i>N_2} b_i \tilde{u}_{n_i}-u_0\Big\|-
\Big\|u_0-\sum_{n_i>N_2} b_i
Q(\tilde{u}_{n_i})\Big\|\\
&\ge&\mathrm{dist}(u_0,[e_i]_{i> N_1})-\sum_{n_i>N_2} \|b_i u_0-b_i
Q(\tilde{u}_{n_i})\|\\&>&\delta - \sum_{n_i>N_2} b_i\cdot
\frac{\delta}{2}=\frac{\delta}{2}.
\end{eqnarray*}
On the other hand,
\[\Big\|\sum_{n_i>N_2} b_i \tilde{u}_{n_i}-
\sum_{n_i>N_2} b_i Q(\tilde{u}_{n_i})\Big\|\le \sum_{n_i>N_2} b_i
\|\tilde{u}_{n_i}-Q(\tilde{u}_{n_i})\|=\sum_{n_i>N_2} b_i\cdot
\frac{1}{2^{n_i}}<\frac{1}{2^{N_2}}< \frac{\delta}{2},\] which leads
to a contradiction.

Thus, there is $N\in\N$ such that $\inf\big\{\|u-Q(u)\|:u\in
[e_i]_{i\ge N}, \|u\|=1\big\}>0.$ Since $\mathrm{Id}_E-Q=T\circ S$,
these imply that $T\circ S|_{[e_i]_{i\ge N}}$ is an isomorphism from
$[e_i]_{i\ge N}$ onto $[T\circ S(e_i)]_{i\ge N}=[T(x_i)]_{i\ge N}$.
By Remark \ref{rmk:1}, $T$ is an isomorphic embedding from $X$ into
$E$, then $S|_{[e_i]_{i\ge N}}$ is an isomorphism from $[e_i]_{i\ge
N}$ onto $[x_i]_{i\ge N}$, that is, $(x_i)_{i\ge N}$ is a basic
sequence equivalent to $(e_i)_{i\ge N}$. Thus, it follows easily
that there is a finite subset $\sigma \subset\{1, . . . , N\}$ such
that $(x_i)_{i\notin\sigma}$ is a Schauder basis of $X$ equivalent
to $(e_i)_{i\notin\sigma}$.

Finally, we prove that, in this case, $\mathrm{exc}(x_i)=\dim(\ker
S)$. By hypothesis, there is a finite subset $\sigma\subset\N$ such
that $(x_i)_{i\notin\sigma}$ is a Schauder basis equivalent to
$(e_i)_{i\notin\sigma}$. Thus, it is equivalent to prove that
$\dim(\ker S)=\mathrm{card}\, \sigma=\mathrm{exc}(x_i)$. First, we
show that $\dim(\ker S)\ge\mathrm{card}\, \sigma$. It is clear that
$S|_{[e_i]_{i\notin\sigma}}$ is an isomorphism of
$[e_i]_{i\notin\sigma}$ onto $[x_i]_{i\notin\sigma}=X$. Then for any
$k\in\sigma$, since $(x_i)_{i\notin\sigma}$ is a Schauder basis of
$X$, let $(x_i^*)_{i\notin\sigma}\subset X^*$ be the biorthogonal
functionals of $(x_i)_{i\notin\sigma}$, we have
$x_k=\sum_{i\notin\sigma} x_i^*(x_k) x_i$. Thus,
\[S(e_k)=x_k=\sum_{i\notin\sigma} x_i^*(x_k) x_i=
\sum_{i\notin\sigma} x_i^*(x_k) S(e_i)=S\Big(\sum_{i\notin\sigma}
x_i^*(x_k) e_i\Big).\] Thus, $S(e_k-\sum_{i\notin\sigma} x_i^*(x_k)
e_i)=0$ for all $k\in\sigma$. $(e_k-\sum_{i\notin\sigma} x_i^*(x_k)
e_i)_{k\in\sigma}\subset \ker S$, and clearly,
$(e_k^*)_{k\in\sigma}\subset E^*$ is the orthogonal functionals. So
$(e_k-\sum_{i\notin\sigma} x_i^*(x_k) e_i)_{k\in\sigma}$ is linear
independent, it follows that $\dim(\ker S)\ge
\dim([e_k-\sum_{i\notin\sigma} x_i^*(x_k)
e_i]_{k\in\sigma})=\mathrm{card}\, \sigma$. Now, we prove that
$\dim(\ker S)\le\mathrm{card}\, \sigma$. If not, that is, $\dim(\ker
S)>\mathrm{card}\, \sigma$, then there exists finite Schauder basis
$(u_k)_{k=1}^{\dim(\ker S)}$ of $\ker S$ such that for all $1\le
k\le \dim(\ker S)$, $u_k=\sum_{i\in\sigma} e_i^*(u_k) e_i+v_k$ where
$v_k=\sum_{i\notin\sigma} e_i^*(u_k) e_i\in [e_i]_{i\notin\sigma}$.
Since $\dim(\ker S)>\mathrm{card}\, \sigma$, we know that there is a
non-zero linear combination $\sum_{k=1}^{\dim(\ker S)} \lambda_k
u_k$ of $(u_k)_{k=1}^{\dim(\ker S)}$ such that
\[\sum_{k=1}^{\dim(\ker S)} \lambda_k \sum_{i\in\sigma} e_i^*(u_k)
e_i=0.\] Thus, $\sum_{k=1}^{\dim(\ker S)} \lambda_k
u_k=\sum_{k=1}^{\dim(\ker S)} \lambda_k v_k\in
[e_i]_{i\notin\sigma}\cap\ker S=\{0\},$ which leads to a
contradiction.

\end{proof}

\begin{proposition}\label{pp:1}
Let $(x_i,f_i)$ be a Schauder frame of a Banach space $X$ and let
$(E_\mathrm{min},(\hat{e}_i))$ be the minimal-associated sequence
space to $(x_i,f_i)$ with $S_\mathrm{min}$ the minimal-associated
reconstruction operator.

If\, the kernel of $S_\mathrm{min}$ contains no copy of $c_0$, then
$\ker S_\mathrm{min}$ is finite dimensional.
\end{proposition}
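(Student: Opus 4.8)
I would prove the contrapositive: \emph{if $\ker S_\mathrm{min}$ is infinite dimensional, then it contains a copy of $c_0$}. The mechanism is that $\|\cdot\|_\mathrm{min}$ is a supremum of segment norms, so that a normalized block basic sequence of $(\hat e_i)$ whose complete blocks are sent by $S_\mathrm{min}$ to \emph{summably small} vectors of $X$ is automatically equivalent to the unit vector basis of $c_0$; the infinite dimensional kernel will supply exactly such a block sequence. Throughout, fix $\eps_n\downarrow 0$ with $\sum_n\eps_n<1$ and set $k_0=0$.

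\emph{Step 1 (blocks near the kernel).} For each $N$, the natural projection $R_N$ of $E_\mathrm{min}$ onto $[\hat e_i]_{i\le N}$ is finite rank, so the restriction of $R_N$ to the infinite dimensional space $\ker S_\mathrm{min}$ has a kernel of finite codimension there, hence a nonzero one; this produces a normalized $u\in\ker S_\mathrm{min}$ supported on $\{i>N\}$. Iterating, and using that $(\hat e_i)$ is a Schauder basis (so tails are small), I would construct a strictly increasing sequence $(k_n)$, normalized $u_n\in\ker S_\mathrm{min}$ supported on $\{i>k_{n-1}\}$, and finitely supported $w_n\in[\hat e_i]_{k_{n-1}<i\le k_n}$ with $\|w_n-u_n\|_\mathrm{min}<\eps_n$. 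Then $(w_n)$ is a seminormalized block basic sequence, and since $S_\mathrm{min}u_n=0$ the images $y_n:=S_\mathrm{min}w_n=S_\mathrm{min}(w_n-u_n)$ satisfy $\|y_n\|_X\le\|S_\mathrm{min}\|\,\eps_n$, so $\sum_n\|y_n\|_X<\infty$.

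\emph{Step 2 (the blocks span $c_0$).} For a finite scalar sequence $(\lambda_n)$ put $v=\sum_n\lambda_n w_n$. Since $v$ is finitely supported, $\|v\|_\mathrm{min}$ equals the diameter in $X$ of the trajectory of partial sums $\sigma_t=\sum_{i\le t}(\text{coefficient of }v)_i\,x_i$, with $\sigma_0=0$. Across a complete block $I_n=(k_{n-1},k_n]$ this trajectory advances by exactly $\lambda_n y_n$, while inside $I_n$ it stays within $|\lambda_n|\,\|w_n\|_\mathrm{min}=|\lambda_n|$ of its entry point; since the entry points are the partial sums of $\sum_n\lambda_n y_n$, they all lie within $\max_n|\lambda_n|\sum_n\|y_n\|_X$ of the origin. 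Hence every $\sigma_t$ lies within $\max_n|\lambda_n|\big(1+\sum_n\|y_n\|_X\big)$ of the origin, giving $\|v\|_\mathrm{min}\le C\max_n|\lambda_n|$ with $C$ independent of the number of terms, while bimonotonicity of $(\hat e_i)$ restricted to the interval $I_n$ gives $\|v\|_\mathrm{min}\ge\max_n|\lambda_n|$. Thus $(w_n)$ is equivalent to the unit vector basis of $c_0$. I expect \emph{this uniform upper bound to be the main obstacle}, as it is the one place where the special max/diameter structure of $\|\cdot\|_\mathrm{min}$ and the summability of the $\|y_n\|_X$ are essential — without them interior blocks would accumulate and the $c_0$ estimate would fail.

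\emph{Step 3 (back to the kernel).} Finally $\big\|\sum_n\lambda_n(u_n-w_n)\big\|_\mathrm{min}\le\sum_n|\lambda_n|\,\eps_n\le\max_n|\lambda_n|\sum_n\eps_n$, so the triangle inequality transfers both estimates of Step 2 from $(w_n)$ to $(u_n)$: with $\sum_n\eps_n<1$ one obtains $(1-\sum_n\eps_n)\max_n|\lambda_n|\le\big\|\sum_n\lambda_n u_n\big\|_\mathrm{min}\le(C+\sum_n\eps_n)\max_n|\lambda_n|$. Hence $(u_n)$ is equivalent to the unit vector basis of $c_0$, and since $[u_n]\subseteq\ker S_\mathrm{min}$, the kernel contains a copy of $c_0$, contradicting the hypothesis. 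Therefore $\ker S_\mathrm{min}$ is finite dimensional.
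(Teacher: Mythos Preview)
Your argument is correct, with only cosmetic imprecisions (you write $\|w_n\|_\mathrm{min}=1$ and deduce the lower bound $\max_n|\lambda_n|$, whereas $\|w_n\|_\mathrm{min}$ is only in $[1-\eps_n,1]$; this shifts the final lower bound in Step~3 to $(1-\eps_1-\sum_n\eps_n)\max_n|\lambda_n|$, which is still positive for suitably small $\eps_n$). The setup---contrapositive, normalized kernel vectors with rightward-moving supports, truncation to a block basic sequence with summably small $S_\mathrm{min}$-images---matches the paper's proof exactly.

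Where you diverge is in the key step. The paper does not verify the $c_0$-equivalence directly: it argues by a second contradiction, assuming $(\tilde u_i)$ is \emph{not} equivalent to the $c_0$ basis, extracting $(c_i)\in c_0$ with $\sum c_i\tilde u_i$ divergent, then estimating $\big\|\sum_{j=l}^m b_j x_j\big\|_X$ interval-by-interval to show the corresponding $X$-series is Cauchy, and finally invoking Lemma~\ref{lm:1} (convergence in $X$ $\Leftrightarrow$ convergence in $E_\mathrm{min}$) to force $\sum c_i\tilde u_i$ to converge after all. Your route is more direct and more transparent: the observation that $\|v\|_\mathrm{min}$ is exactly the diameter of the partial-sum trajectory $\{\sigma_t\}$ in $X$, together with $\|\sigma_{k_n}\|\le\max_j|\lambda_j|\sum_j\|y_j\|_X$, yields the uniform $c_0$ upper bound in one stroke and bypasses Lemma~\ref{lm:1} entirely. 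The paper's approach has the mild advantage of packaging the ``convergence transfer'' as a reusable lemma, but your geometric reading of $\|\cdot\|_\mathrm{min}$ explains \emph{why} $c_0$ is the inevitable obstruction, which is arguably the more illuminating proof.
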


\begin{proof} Suppose, to the contrary, that
$\ker S_\mathrm{min}$ is infinite dimensional. Choose $u_1\in \ker
S_\mathrm{min}$ with $\|u_1\|=1$. Then
\[0=S_\mathrm{min}(u_1)=S_\mathrm{min}\big(\sum \hat{e}_i^*(u_1)\hat{e}_i\big)
=\sum \hat{e}_i^*(u_1)S_\mathrm{min}(\hat{e}_i)=\sum
\hat{e}_i^*(u_1) x_i,\] where $(\hat{e}_i^*)\subset
E^*_\mathrm{min}$ is the biorthogonal functionals of $(\hat{e}_i)$.
Take $(\epsilon_i),(\delta_i)\subset \R^+$ with $\sum
\epsilon_i<1/2$ and $\sum \delta_i<1$. Then we can find $n_1$ so big
that $\|\sum_{i=n_1+1}^\infty
\hat{e}_i^*(u_1)\hat{e}_i\|<\epsilon_1$ and $\|\sum_{i=1}^{n_1}
\hat{e}_i^*(u_1)x_i\|<\delta_1$. Since $\dim(\ker
S_\mathrm{min})=\infty$ and $\mathrm{codim}([\,\hat{e}_i]_{i\ge
n_1+1})=n_1<\infty$, there is $u_2\in \ker S_\mathrm{min}\cap
[\,\hat{e}_i]_{i\ge n_1+1}$ with $\|u_2\|=1$. As in previous step,
from the fact that
\[0=S_\mathrm{min}(u_2)=S_\mathrm{min}\big(\sum_{i=n_1+1}^\infty
\hat{e}_i^*(u_2)\hat{e}_i\big)=\sum_{i=n_1+1}^\infty
\hat{e}_i^*(u_2)S_\mathrm{min}(\hat{e}_i)=\sum_{i=n_1+1}^\infty
\hat{e}_i^*(u_2)x_i,\] we can find $n_2>n_1$ such that
$\|\sum_{i=n_2+1}^{\infty} \hat{e}_i^*(u_2)\hat{e}_i\|<\epsilon_2$
and $\|\sum_{i=n_1+1}^{n_2} \hat{e}_i^*(u_2)x_i\|<\delta_2.$
Continuing in this fashion, we construct an increasing sequence
$\{n_i\}_{i=0}^\infty$ with $n_0=0$, and a sequence
$(u_i)_{i=1}^\infty$, where $u_i\in \ker S_\mathrm{min}\cap
[\,\hat{e}_i]_{i\ge n_{i-1}+1}$ and $\|u_i\|=1$, such that
\begin{equation*}
\Big\|\sum_{j=n_i+1}^{\infty}
\hat{e}_j^*(u_i)\hat{e}_j\Big\|<\epsilon_i \ \ \mbox{ and } \ \
\Big\|\sum_{j=n_{i-1}}^{n_i} \hat{e}_j^*(u_i)x_j\Big\|<\delta_i
\quad \mbox{ for all }\, i\in\N.
\end{equation*}
Let $\tilde{u}_i=\sum_{j=n_{i-1}+1}^{n_{i}}
\hat{e}_j^*(u_{i})\hat{e}_j$ for $i\in\N$. Then,
\[1=\|u_{i}\|\ge\Big\|\sum_{j=n_{i-1}+1}^{n_{i}}
\hat{e}_j^*(u_{i})\hat{e}_j\Big\|=\|\tilde{u}_i\|\ge
\|u_i\|-\Big\|\sum_{j=n_i+1}^{\infty}
\hat{e}_j^*(u_i)\hat{e}_j\Big\|\ge 1-\epsilon_i>\frac{1}{2}.\] It
follows that $(\tilde{u}_i)$ is a semi-normalized block basic
sequence of $(\hat{e}_i)$. Moreover,
\[\sum \|u_i-\tilde{u}_i\|=\sum \big\|\sum_{j=n_i+1}^{\infty}
\hat{e}_j^*(u_i)\hat{e}_j\big\|<\sum \epsilon_i<\frac{1}{2},\] there
is $N\in\N$ so big that $(u_i)_{i\ge N}$ is a normalized basic
sequence which is equivalent to $(\tilde{u}_i)_{i\ge N}$. Since
$(u_i)\subset \ker S_\mathrm{min}$ that contains no copy of $c_0$,
it follows that $(\tilde{u}_i)_{i\ge N}$ is not equivalent to the
unit vector basis of $c_0$. Whenever $\sum a_i \tilde{u}_i$
converges, $(a_i)\in c_0$. Thus, these imply that there must exist
$(c_i)\in c_0$ such that $\sum c_i \tilde{u}_i$ does not converges.
Let $b_j=c_i \hat{e}_j^*(u_{i+1})$ for $n_i+1\leq j\leq n_{i+1}$.

Then we claim that \[\sum_{j=1}^\infty b_j x_j=\sum_{i=0}^\infty
\sum_{j=n_i+1}^{n_{i+1}} c_i \hat{e}_j^*(u_{i+1})x_j\] converges.
Indeed, for any $\epsilon>0$, choose $N$ so big that $\sup_{i\ge N}
|c_{i}|<\min\{\frac{\epsilon}{3\|S_\mathrm{min}\|},
\frac{\epsilon}{3}\}.$ For any $n_{N}\le l\le m\in\N$, if there is
$i_0\ge 0$ such that $l,m\in [n_{i_0}+1, n_{i_0+1}]$, then
\begin{eqnarray*}\Big\|\sum_{j=l}^m b_j
x_j\Big\|&=&\Big\|\sum_{j=l}^m c_{i_0}
\hat{e}_j^*(u_{i_0+1})x_j\Big\|=|c_{i_0}|\cdot\Big\| \sum_{j=l}^m
\hat{e}_j^*(u_{i_0+1})S_\mathrm{min}(\hat{e}_j)\Big\|\\ &\le&
|c_{i_0}|\cdot \|S_\mathrm{min}\|\cdot \Big\|\sum_{j=l}^m
\hat{e}_j^*(u_{i_0+1})\hat{e}_j\Big\|\le |c_{i_0}|\cdot
\|S_\mathrm{min}\|\le \frac{\epsilon}{3}. \end{eqnarray*} If not,
there is $0\le i_1<i_2$ such that $l\in [n_{i_1}+1, n_{i_1+1}]$ and
$m\in [n_{i_2}+1, n_{i_2+1}]$, then
\begin{eqnarray*}\Big\|\sum_{j=l}^m
b_j x_j\Big\|&=&\Big\|\sum_{j=l}^{n_{i_1+1}} c_{i_1}
\hat{e}_j^*(u_{i_1+1})x_j+
\sum_{k=i_1+1}^{i_2}\sum_{j=n_{k}+1}^{n_{k+1}}
c_{k}\hat{e}_j^*(u_{k+1})x_j+ \\ & & + \sum_{j=n_{i_2}+1}^m c_{i_2}
\hat{e}_j^*(u_{i_2+1})x_j\Big\|\\
&\le& \Big\|\sum_{j=l}^{n_{i_1+1}} c_{i_1}
\hat{e}_j^*(u_{i_1+1})x_j\Big\|+ \sum_{k=i_1+1}^{i_2} |c_{k}|\cdot
\Big\| \sum_{j=n_{k}+1}^{n_{k+1}} \hat{e}_j^*(u_{k+1})x_j\Big\|+\\
& & + \Big\|\sum_{j=n_{i_2}+1}^m c_{i_2}
\hat{e}_j^*(u_{i_2+1})x_j\Big\|\\
&\le&  |c_{i_1}|\cdot \|S_\mathrm{min}\|+\sum_{k=i_1+1}^{i_2}
\delta_k \, |c_{k}|+ |c_{i_2}|\cdot \|S_\mathrm{min}\|\\
&\le&  |c_{i_1}|\cdot \|S_\mathrm{min}\|+\sup_{k>i_1}
|c_{k}|+|c_{i_2}|\cdot
\|S_\mathrm{min}\|\\&\le&\frac{\epsilon}{3}+\frac{\epsilon}{3}+\frac{\epsilon}{3}=
\epsilon.
\end{eqnarray*}
Thus, $\sum_{j=1}^\infty b_j x_j=\sum_{i=0}^\infty
\sum_{j=n_i+1}^{n_{i+1}} c_i \hat{e}_j^*(u_{i+1})x_j$ converges. So
by Lemma \ref{lm:1}, $\sum_{j=1}^\infty b_j
\hat{e}_j=\sum_{i=0}^\infty \sum_{j=n_i+1}^{n_{i+1}} c_i
\hat{e}_j^*(u_{i+1})\hat{e}_j=\sum c_i \tilde{u}_i$ converges, which
leads to a contradiction.
\end{proof}

We are now ready to present the proof of our main theorem and
corollary:

\begin{proof}[Proof of Theorem \ref{th:1}]
$(a)\Leftrightarrow(b)$ follows from Proposition \ref{pp:1}.

$(b)\Leftrightarrow(c)$ and $\mathrm{exc}(x_i)=\dim(\ker
S_\mathrm{min})$ both are obtained by Proposition \ref{pp:2}.
\end{proof}

\begin{proof}[Proof of Corollary \ref{cor:1}]
It follows from Theorem \ref{th:1}.
%$(a)\Leftrightarrow(b)$ is get by Remark \ref{rmk:1} and Proposition
%\ref{pp:1}.
%
%$(b)\Leftrightarrow(c)$ and $\mathrm{exc}(x_i)=\dim(\ker
%S_\mathrm{min})$ are obtained by Proposition \ref{pp:2}.
\end{proof}

The following example is a special case of our main results.

\begin{example}
Let $(z_i)$ be a normalized Schauder basis of a Banach space $X$
with biorthogonal functionals $(z_i^*)$. Let $(x_i,f_i)\subset
X\times X^*$ be defined by $x_{2k-1}=x_{2k}=z_k$ and
$f_{2k-1}=f_{2k}=z_k^*/2$ for all $k\in\N.$ Then, for every $x\in
X$, we obtain that \[x=\sum_{k=1}^\infty
z_k^*(x)z_k=\sum_{k=1}^\infty
\frac{z_k^*}{2}(x)z_k+\frac{z_k^*}{2}(x)z_k=\sum_{k=1}^\infty
f_{2k-1}(x)x_{2k-1}+f_{2k}(x)x_{2k}=\sum_{i=1}^\infty f_i(x)x_i.\]
It follows that $(x_i,f_i)$ is a Schauder frame of $X$. Let
$(E_\mathrm{min},(\hat{e}_i))$ be the minimal sequence space
associated to $(x_i,f_i)$ with $S_\mathrm{min}$ the
minimal-associated reconstruction operator. Then
\[\max_{k\in\N}|a_k|\le \Big\|\sum a_k (\hat{e}_{2k}-\hat{e}_{2k-1})\Big\|\le 2\max_{k\in\N}|a_k|\,,
\, \mbox{ for all } (a_k) \in c_{00}.\] Moreover,
$(\hat{e}_{2k}-\hat{e}_{2k-1})\subset \ker S_\mathrm{min}$. Thus,
$\ker S_\mathrm{min}$ contains a $\sqrt{2}$-copy of $c_0$.
\end{example}
\begin{proof}
First, since $S_\mathrm{min}(\hat{e}_{2k}-\hat{e}_{2k-1})
=x_{2k}-x_{2k-1}=z_k-z_k=0$, it implies that
$(\hat{e}_{2k}-\hat{e}_{2k-1})\subset \ker S_\mathrm{min}$.
Moreover, for any $n\in\N$, we have $\|\sum_{2k\le 2n} a_k
z_{k}-\sum_{2k-1\le 2n} a_k z_{k}\|=0$ and $\|\sum_{2k\le 2n-1} a_k
z_{k}-\sum_{2k-1\le 2n-1} a_k z_{k}\|=\|a_n z_n\|=|a_n|.$ Then for
any $(a_k)\in c_{00}$,
\begin{eqnarray*}
& &\Big\|\sum_{k=1}^\infty a_k
(\hat{e}_{2k}-\hat{e}_{2k-1})\Big\|=\Big\|\sum_{k=1}^\infty a_k
\hat{e}_{2k}-\sum_{k=1}^\infty a_k
\hat{e}_{2k-1}\Big\|\\&=&\max_{m\le n} \Big\|\sum_{m\le 2k\le n} a_k
x_{2k}-\sum_{m\le 2k-1\le n} a_k x_{2k-1}\Big\|\ge\max_{n\in\N}
\Big\|\sum_{2k\le n} a_k x_{2k}-\sum_{2k-1\le n} a_k
x_{2k-1}\Big\|\\&=&\max_{n\in\N} \Big\|\sum_{2k\le n} a_k
z_{k}-\sum_{2k-1\le n} a_k z_{k}\Big\|=\max_{n\in\N} |a_n|,
\end{eqnarray*}
and by the above formula, we obtain that
\begin{eqnarray*}& &\Big\|\sum_{k=1}^\infty a_k
(\hat{e}_{2k}-\hat{e}_{2k-1})\Big\|\\&=&\max_{m\le n}
\Big\|\sum_{m\le 2k\le n} a_k x_{2k}-\sum_{m\le 2k-1\le n} a_k
x_{2k-1}\Big\|\\&=&\max_{m\le n} \Big\|\sum_{2k\le n} a_k
x_{2k}-\sum_{2k\le m-1} a_k x_{2k}-\sum_{2k-1\le n} a_k
x_{2k-1}+\sum_{2k-1\le m-1} a_k x_{2k-1}\Big\|\\&\le& 2
\max_{n\in\N} \Big\|\sum_{2k\le n} a_k x_{2k}-\sum_{2k-1\le n} a_k
x_{2k-1}\Big\|=2 \max_{n\in\N} |a_n|.
\end{eqnarray*}
This completes the proof.
\end{proof}

\section{Applications to Besselian Frames and Near-Riesz Bases}

By \cite[Remark 2.7]{CDOSZ}, we know that, for a sequence $(x_j)$ in
$H$, $(x_j)$ is a Hilbert frame for $H$ if and only if there is a
sequence $(f_j)$ in $H$ such that $(x_j,f_j)$ is a Schauder frame of
$H$ and that $(\ell_2(\J), (e_j)_{j\in\J})$ (with its unit vector
basis) is an associated sequence space to $(x_j,f_j)$.

\begin{definition}\cite{Ch,Ho}
If $(x_i)_{i=1}^\infty$ is a Hilbert frame for a Hilbert space $H$,
then we say that $(x_i)$ for $H$ is
\begin{enumerate}
\item[(i)] \emph{Besselian} if whenever $\sum_{i=1}^\infty a_ix_i$ converges,
then $(a_i)\in\ell_2$;
\item[(ii)] a \emph{near-Riesz basis} if there is a finite subset $\sigma\subset
\N$ such that $(x_i)_{i\notin\sigma}$ is a Riesz basis of $H$.
\end{enumerate}
\end{definition}

Then we get the main result of \cite{Ho} as a corollary of our
theorem on a special case.
\begin{corollary}\cite{Ho}
Let $(x_i)$ be a Hilbert frame in a Hilbert space $H$.

Then the following conditions are equivalent:
\begin{enumerate}
\item[(i)] $(x_i)$ is a near-Riesz basis for $H$;
\item[(ii)] $(x_i)$ is Besselian;
\item[(iii)] $\sum_{i=1}^\infty a_i x_i$ converges in $H$ if and only if $(a_i)\in \ell_2.$
\end{enumerate}

Furthermore, in this case, we have $\mathrm{exc}(x_i)=\dim (\ker
S),$ where $S:\ell_2\to H$ with $\sum a_i e_i\mapsto \sum a_i x_i$
is the pre-frame operator.
\end{corollary}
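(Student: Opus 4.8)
The plan is to deduce this corollary from Corollary \ref{cor:1} and Proposition \ref{pp:2}, exploiting two associated sequence spaces at once: the space $(\ell_2,(e_i))$, whose associated reconstruction operator is exactly the pre-frame operator $S$, and the minimal one $(E_\mathrm{min},(\hat{e}_i))$. The decisive preliminary observation is that, since $H$ is a Hilbert space, it contains no copy of $c_0$; hence the hypotheses (b)(i) and (c)(i) of Corollary \ref{cor:1} hold automatically and that corollary becomes directly applicable here.

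First I would settle the easy equivalences. Condition (iii) is the conjunction of two implications; its ``only if'' half is precisely the Besselian property (ii), while its ``if'' half is automatic: because $(\ell_2,(e_i))$ is an associated sequence space, $S$ is bounded, so for $(a_i)\in\ell_2$ the series $\sum a_i e_i$ converges in $\ell_2$ and therefore $\sum a_i x_i=S(\sum a_i e_i)$ converges in $H$. Thus (ii) $\Leftrightarrow$ (iii). For (i) $\Rightarrow$ (iii): if $\sigma$ is finite and $(x_i)_{i\notin\sigma}$ is a Riesz basis, it is equivalent to an orthonormal basis, so $\sum_{i\notin\sigma}a_i x_i$ converges iff $(a_i)_{i\notin\sigma}\in\ell_2$; deleting the finite set $\sigma$ changes neither the convergence of $\sum a_i x_i$ nor membership in $\ell_2$, which gives (iii).

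The substantive direction is (iii) $\Rightarrow$ (i). By Lemma \ref{lm:1}, $\sum a_i x_i$ converges iff $\sum a_i\hat{e}_i$ converges in $E_\mathrm{min}$, whereas $(a_i)\in\ell_2$ iff $\sum a_i e_i$ converges in $\ell_2$; hence (iii) asserts exactly that $(\hat{e}_i)$ is equivalent to the unit vector basis of $\ell_2$. In particular $E_\mathrm{min}\cong\ell_2$ contains no copy of $c_0$, so Corollary \ref{cor:1} yields that $(x_i)$ is a near-Schauder basis and $\dim(\ker S_\mathrm{min})<\infty$. It then remains to upgrade ``near-Schauder'' to ``near-Riesz.'' Applying the necessity argument of Proposition \ref{pp:2} to the associated space $(E_\mathrm{min},(\hat{e}_i))$ produces an $N$ for which $(x_i)_{i\ge N}$ is a basic sequence equivalent to $(\hat{e}_i)_{i\ge N}$, hence, by the equivalence just noted, equivalent to the unit vector basis of $\ell_2$. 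Since the surviving Schauder basis $(x_i)_{i\notin\sigma}$ agrees with this $\ell_2$-equivalent tail off a finite set, a routine projection argument, splitting the finite-dimensional head from the tail by means of the bounded basis projections, shows that $(x_i)_{i\notin\sigma}$ is itself equivalent to an orthonormal basis, i.e.\ a Riesz basis; thus $(x_i)$ is a near-Riesz basis.

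Finally, once (i) is established, Proposition \ref{pp:2} applied to the associated sequence space $(\ell_2,(e_i))$ gives $\mathrm{exc}(x_i)=\dim(\ker S)$ with $S$ the pre-frame operator, which is the asserted identity. I expect the only genuinely delicate point to be this last upgrade from a Schauder basis to a Riesz basis in (iii) $\Rightarrow$ (i): one must carry the basis equivalence through Proposition \ref{pp:2} and verify that adjoining finitely many vectors to an $\ell_2$-equivalent basic sequence preserves equivalence to an orthonormal basis. Everything else is bookkeeping resting on the already proved Theorem \ref{th:1}, Corollary \ref{cor:1}, and Lemma \ref{lm:1}.
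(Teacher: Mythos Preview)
Your proposal is correct and follows essentially the same route as the paper: show that the Besselian property forces $(\hat{e}_i)$ to be equivalent to the unit vector basis of $\ell_2$, so that $E_{\min}$ contains no copy of $c_0$, and then invoke Corollary~\ref{cor:1}. You are in fact more careful than the paper, which simply asserts that Corollary~\ref{cor:1} yields a near-Riesz basis and that $\mathrm{exc}(x_i)=\dim(\ker S)$; your explicit appeal to Proposition~\ref{pp:2} (once for $E_{\min}$ to pass from Schauder to Riesz, once for $\ell_2$ to get the excess formula for the pre-frame operator) fills in exactly the steps the paper leaves implicit.
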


\begin{proof} (i)$\Rightarrow$(ii)$\Rightarrow$(iii) is trivial.

(ii)$\Leftrightarrow$(i). If $(x_i)$ is Besselian, then it follows
easily that the unit vector basis of $\ell_2$ is equivalent to the
minimal-associated Schauder basis, which implies that the
minimal-associated sequence space is isomorphic to $\ell_2$, which
contains no copy of $c_0$. By Corollary \ref{cor:1}, we obtain that
$(x_i)$ is a near-Riesz basis for $H$, and $\mathrm{exc}(x_i)=\dim
(\ker S)$.
\end{proof}

\vskip1cm

\noindent\textbf{Acknowledgment.}

The authors express their appreciation to Dr. Thomas Schlumprecht
for many very helpful comments regarding frame theory in Banach
spaces. The interested readers should consult the papers
\cite{CDOSZ,CHL,Liu}. The authors also would like to thank the
referees for helpful suggestions that help us improve the
presentation of this paper.
%--------------------------
\vskip1.5cm

\end{document}